\DeclareOldFontCommand{\rm}{\normalfont\rmfamily}{\mathrm}%
\DeclareOldFontCommand{\sf}{\normalfont\sffamily}{\mathsf}%
\DeclareOldFontCommand{\tt}{\normalfont\ttfamily}{\mathtt}%
\DeclareOldFontCommand{\bf}{\normalfont\bfseries}{\mathbf}%
\DeclareOldFontCommand{\it}{\normalfont\itshape}{\mathit}%
\DeclareOldFontCommand{\sl}{\normalfont\slshape}{\@nomath\sl}%
\DeclareOldFontCommand{\sc}{\normalfont\scshape}{\@nomath\sc}%
\DeclareRobustCommand*\cal{\@fontswitch\relax\mathcal}%
\DeclareRobustCommand*\mit{\@fontswitch\relax\mathnormal}%
\renewcommand\normalsize{%
   \@setfontsize\normalsize{10bp}{12bp}%
   \abovedisplayskip 12\p@ \@plus2\p@ \@minus1\p@
   \abovedisplayshortskip \z@ \@plus3\p@%
   \belowdisplayshortskip 3\p@ \@plus3\p@ \@minus3\p@%
   \belowdisplayskip \abovedisplayskip%
   \let\@listi\@listI}%
\newcommand\medsize{%
   \@setfontsize\small\@xipt{13}%
   \abovedisplayskip 5\p@ \@plus3\p@ \@minus4\p@
   \abovedisplayshortskip \z@ \@plus2\p@
   \belowdisplayshortskip 3\p@ \@plus2\p@ \@minus2\p@
   \def\@listi{\leftmargin\leftmargini
               \topsep 4\p@ \@plus2\p@ \@minus2\p@
               \parsep 2\p@ \@plus\p@ \@minus\p@
               \itemsep \parsep}%
   \belowdisplayskip \abovedisplayskip}%
\renewcommand\small{%
   \@setfontsize\small\@ixpt{11}%
   \abovedisplayskip 5\p@ \@plus3\p@ \@minus4\p@
   \abovedisplayshortskip \z@ \@plus2\p@
   \belowdisplayshortskip 3\p@ \@plus2\p@ \@minus2\p@
   \def\@listi{\leftmargin\leftmargini
               \topsep 4\p@ \@plus2\p@ \@minus2\p@
               \parsep 2\p@ \@plus\p@ \@minus\p@
               \itemsep \parsep}%
   \belowdisplayskip \abovedisplayskip}%
\renewcommand\footnotesize{%
   \@setfontsize\footnotesize{7}{8}%
   \abovedisplayskip 5\p@ \@plus2\p@ \@minus4\p@
   \abovedisplayshortskip \z@ \@plus\p@
   \belowdisplayshortskip 3\p@ \@plus\p@ \@minus2\p@
   \def\@listi{\leftmargin\leftmargini
               \topsep 3\p@ \@plus\p@ \@minus\p@
               \parsep 2\p@ \@plus\p@ \@minus\p@
               \itemsep \parsep}%
   \belowdisplayskip \abovedisplayskip}
\renewcommand\scriptsize{\@setfontsize\scriptsize\@ixpt\@ixpt}%
\newcommand\scrisize{\@setfontsize\scrisize{9.3}{9}}%
\renewcommand\tiny{\@setfontsize\tiny\@vpt\@vipt}%
\renewcommand\large{\@setfontsize\large{12}{14}}%
\newcommand\larg{\@setfontsize\larg{11}{13}}%
\renewcommand\Large{\@setfontsize\Large{16}{18}}%
\renewcommand\LARGE{\@setfontsize\LARGE\@xviipt{22}}%
\renewcommand\huge{\@setfontsize\huge\@xxpt{25}}%
\renewcommand\Huge{\@setfontsize\Huge\@xxvpt{30}}%
\DeclareMathSizes{\@ixpt}{\@ixpt}{7}{5}%
\DeclareMathSizes{\@xpt}{\@xpt}{7}{5}%
\DeclareMathSizes{\@xipt}{\@xipt}{7}{5}%
\DeclareRobustCommand*\textsubscript[1]{%
  \@textsubscript{\selectfont#1}}%
\def\@textsubscript#1{%
  {\m@th\ensuremath{_{\mbox{\fontsize\sf@size\z@#1}}}}}%
\def\eqnarray{%
   \stepcounter{equation}%
   \def\@currentlabel{\p@equation\theequation}%
   \global\@eqnswtrue
   \m@th
   \global\@eqcnt\z@
   \tabskip\@centering
   \let\\\@eqncr
   $$\everycr{}\halign to\displaywidth\bgroup
       \hskip\@centering$\displaystyle\tabskip\z@skip{##}$\@eqnsel
      &\global\@eqcnt\@ne\hskip \tw@\arraycolsep \hfil${##}$\hfil
      &\global\@eqcnt\tw@ \hskip \tw@\arraycolsep
         $\displaystyle{##}$\hfil\tabskip\@centering
      &\global\@eqcnt\thr@@ \hb@xt@\z@\bgroup\hss##\egroup
         \tabskip\z@skip
      \cr
}
\def\endeqnarray{%
      \@@eqncr
      \egroup
      \global\advance\c@equation\m@ne
   $$\@ignoretrue
}
\def\raggedright{\rightskip0pt plus 1fil\parfillskip=0pt\relax}%
\def\raggedcenter{\leftskip=0pt plus 0.5fil\rightskip=0pt plus 0.5fil%
\parfillskip=0pt\let\hb=\break}%
\def\titraggedcenter{\leftskip=12pt plus 0.5fil\rightskip=12pt plus 0.5fil%
\parfillskip=0pt\let\hb=\break}%
\def\absraggedcenter{\leftskip=24pt plus 0.5fil\rightskip=24pt plus 0.5fil%
\parfillskip=0pt\let\hb=\break}%
\def\@begintheorem#1#2[#3]{%
  \deferred@thm@head{\the\thm@headfont \thm@indent
    \@ifempty{#1}{\let\thmname\@gobble}{\let\thmname\@iden}%
    \@ifempty{#2}{\let\thmnumber\@gobble}{\let\thmnumber\@iden}%
    \@ifempty{#3}{\let\thmnote\@gobble}{\let\thmnote\@iden}%
    \thm@swap\swappedhead\thmhead{#1}{#2}{#3}%
    \the\thm@headpunct
    \thmheadnl 
    \hskip\thm@headsep
  }%
  \ignorespaces
}
\def\@endtheorem{\endtrivlist\@endpefalse}
\newtheorem{theorem}{Theorem}
\newtheorem{lemma}[theorem]{Lemma}
\newtheorem{proposition}[theorem]{Proposition}
\newtheorem{corollary}[theorem]{Corollary}
\theoremstyle{definition}
\theoremstyle{remark}
\newtheorem{remark}[theorem]{Remark}
\newcommand{\D}{\mathrm{d}}
\begin{document}

\title{A note on the NLS equation on Cartan-Hadamard manifolds with unbounded and vanishing potentials}
\author{Luigi Appolloni \\ \small Dipartimento di Matematica e Applicazioni \\ \small Università degli Studi di Milano Bicocca \\ \small via Roberto Cozzi 55, I-20125 Milano \and Giovanni Molica Bisci \\ \small Dipartimento di Scienze Pure e Applicate \\ \small Università di Urbino Carlo Bo \\ \small piazza della Repubblica 13,  I-61029 Urbino \and Simone Secchi \\ \small Dipartimento di Matematica e Applicazioni \\ \small Università degli Studi di Milano Bicocca \\ \small via Roberto Cozzi 55, I-20125 Milano}



\date{\DTMnow}
\maketitle

\begin{abstract}
  We study the semilinear equation \(-\Delta_g u + V(\sigma) u = f(u)\) on
  a Cartan-Hadamard manifold \(\mathcal{M}\) of dimension \(N \geq 3\), and we prove
  the existence of a nontrivial solution under suitable assumptions on
  the potential function \(V \in C(\mathcal{M})\). In particular, the
  decay of \(V\) at infinity is allowed, with some restrictions
	related to the geometry of \(\mathcal{M}\). We generalize some results proved in $\mathbb{R}^N$ by Alves \emph{et al.}, see \cite{Alves_2012}.
\end{abstract}

\section{Introduction}

The study of Nonlinear Schrödinger Equations of the form
\begin{equation} \label{eq:NLS} -\Delta u + V(x) u = f(u)
  \quad\text{in $\mathbb{R}^N$}
\end{equation}
is a classical research topic in the theory of Partial Differential
Equations, and the lack of compactness introduced by the non-compact
space $\mathbb{R}^N$ is an obstruction to the application of basic
tools of Nonlinear Analysis. For example, Variational Methods
typically require the validity of some \emph{compactness condition}
for the so-called Palais-Smale sequences associated to the previous
equation. At this point, suitable conditions on the nonlinearity $f$
and on the potential function $V \colon \mathbb{R}^N \to \mathbb{R}$
must be imposed in order to find solutions.

The basic case in which $V$ coincides with a positive constant was
studied in \cite{Berestycki_1983}, while P.H. Rabinowitz considered in
\cite{Rabinowitz_1992} the case of coercive potentials. i.e.
\begin{equation*}
	0 < \inf_{x \in \mathbb{R}^N} V(x) < \liminf_{\vert x \vert \to +\infty} V(x).
\end{equation*}
While standard embedding theorems for weighted Sobolev spaces ensure the existence of solutions under the strong assumption
\begin{equation*}
	\lim_{\vert x \vert \to +\infty} V(x) = +\infty,
\end{equation*}
a milder condition was introduced in \cite{Bartsch_1995}: for every $M>0$, the Lebesgue measure of the set
\begin{equation*}
	\left\lbrace x \in \mathbb{R}^N \mid V(x) \leq M \right\rbrace
\end{equation*}
must be finite. In all these cases, the potential $V$ may be unbounded
from above, but it has to be bounded away from zero on the whole
$\mathbb{R}^N$. Mathematically, this implies that equation
\eqref{eq:NLS} can be successfully set in the Sobolev space
$H^1(\mathbb{R}^N)$.

On the other hand, the presence of a vanishing potential $V$, in the sense that $\inf_{x \in \mathbb{R}^N} V(x)=0$, introduces additional difficulties, starting from the fact that solutions need not lie in $L^2(\mathbb{R}^N)$. We refer to \cite{zbMATH05720849,Ambrosetti_2005,Benci_2005} for some recent existence results, which have been extended in several directions later. The interested readers can consult also \cite{MR2254489},  \cite{MR4232664}, \cite{MR2002047}, \cite{MR1899450} and the reference therein for a complete summary on the already existing results. We point out that in all the cited papers the NLS equation \eqref{eq:NLS} is set in the standard Euclidean space $\mathbb{R}^N$.

\medskip

Although the Schr\"odinger equation in $\mathbb{R}^N$ has been
extensively studied, there is a surprising lack of understanding when
it comes to looking for solutions for the equation on non-Euclidean
spaces such as Riemannian Manifolds. One of the first contributions in
this direction is given in the papers \cite{MR4020749} and
\cite{MR3490853} where the authors proved the existence of solutions
for the Schr\"odinger equation or for the Schr\"odinger-Maxwell system
requiring suitable bounds on the Ricci or sectional curvature. More
recently, the same authors of this paper proved respectively in
\cite{MR4540772} and \cite{MR4511374} the existence of three solutions
for the Schr\"odinger equation on a manifold with asymptotically
non-negative Ricci curvature with a coercive potential and the
existence of infinitely many solutions on a Cartan-Hadamard manifold
with a constant potential and an oscillatory nonlinearity. We also
refer to \cite[Part III, Chapter 8]{molica2021nonlinear} for recent
results about nonlinear equations on Cartan-Hadamard manifolds.

In this note we deal with a semilinear elliptic problem of the form
\begin{equation}
\left\lbrace
\begin{array}{ll}
-\Delta_g u + V(\sigma) u = f(u) &\hbox{on $\mathcal{M}$}\\
u>0 &\hbox{on $\mathcal{M}$},
\end{array}
\right.
\label{eq:P}
\end{equation}
where $\mathcal{M}$ is a \emph{non-compact} manifold of dimension
$N \geq 3$, $V$ is a real-valued continuous potential function on
$\mathcal{M}$, and $f \colon \mathbb{R} \to \mathbb{R}$ is a
continuous function. Inspired by the paper~\cite{Alves_2012}, we prove
the existence of a positive solution under suitable assumptions, for
which we refer to Section \ref{sec:2}. To the best of our knowledge,
our result is new in the framework of Cartan-Hadamard manifolds.

\medskip

It should be noted that the Riemannian setting may differ considerably
from the Euclidean one for several reasons. First of all, the behavior
of the potential $V$ \emph{at infinity} requires a good replacement
for the basic condition $\Vert x \Vert \to +\infty$ in
$\mathbb{R}^N$. The Riemannian distance from a \emph{fixed} point is
the first attempt, but of course the condition at infinity must be
independent of the choice of local charts. On the other hand, the
topological approach which consists in saying that a sequence diverges
to infinity if and only if it escapes every compact subset, may be too
weak for a quantitative analysis.

A good compromise is the use of \emph{manifolds with a pole}, see \cite{kasue1981riemannian} and the references therein. By definition, a Riemannian manifold  has a pole $o$ if and only if the exponential map at $o$ induces a global diffeomorphism. This allows us to replace the Euclidean norm of $\mathbb{R}^N$ with the distance from the pole $o$, $d_g(\cdot,o)$.

\smallskip

But even under very specific assumptions on the Riemaniann metric, the geometry of the manifold puts in jeopardy some standard tricks of Nonlinear Analysis, like the use of cut-off functions. See Remark \ref{rem:6} below.

\section{Quick survey of model manifolds and main result} \label{sec:2}

We will work on a Cartan-Hadamard manifold $\mathcal{M}$ of dimension
$N \geq 3$, i.e.  a simply-connected complete non-compact manifold
with non-positive sectional curvature. As is well know, the cut locus
of any point of $\mathcal{M}$ is empty, hence $\mathcal{M}$ is a
manifold with a pole. We collect some basic information about
Riemannian geometry in our setting. We follow
\cite{monticelli2020nonexistence}, and we refer to
\cite{Grigor_yan_1999} for more details.

\begin{itemize}
  \item We fix a pole $o \in \mathcal{M}$, which we
  will consider as the \emph{origin}.  For every
  $\sigma \in \mathcal{M}$, $\sigma \neq o$, we may define polar
  coordinates as follows: we let $r = d_g(\sigma,o) >0$ and $\theta$
  be an angle such that the shortest geodesic from $o$ to $\sigma$
  starts with direction $\theta$ in the tangent space
  $T_\sigma \mathcal{M}$. Since $T_\sigma \mathcal{M}$ can be
  identified with $\mathbb{R}^N$, the angle $\theta$ may be seen as an
  element of the sphere $\mathbb{S}^{N-1}$.

\item The Riemannian metric of $\mathcal{M}$ is expressed in polar coordinates as
\begin{equation*}
	g = \D r^2 + A_{ij}(r,\theta) \D \theta^i \, \D \theta^j
\end{equation*}
for some positive-definite matrix $[A_{ij}]$. Here $(\theta^1,\ldots,\theta^{N-1})$ are local coordinates on $\mathbb{S}^{N-1}$.

\item The Laplace-Beltrami operator is then written in the form
\begin{equation*}
	\Delta_g = \frac{\partial^2}{\partial r^2} +
	\mathcal{F}(r,\theta) \frac{\partial}{\partial r} + \Delta_{S_r},
\end{equation*}
where
\begin{equation*}
	\mathcal{F}(r,\theta) = \frac{\partial}{\partial r} \log \sqrt{\det A_{ij}(r,\theta)},
\end{equation*}
and $\Delta_{S_r}$ is the Laplace-Beltrami operator on the sphere $S_r = \partial B_r(o)$.

\item	A model manifold is a manifold with a pole such that the Riemannian metric has the form
	\begin{equation*}
		g = \D r^2 + h(r)^2 \, \D \theta^2,
	\end{equation*}
	where $\D \theta^2=\beta_{ij}\, \D \theta^i \, \D \theta^j$ is the standard metric on $\mathbb{S}^{N-1}$ and $h$ is a  smooth function such that $h(0)=0$, $h'(0)=1$ and $h(r)>0$ for $r>0$.
%
\item On a model manifold the Laplace-Beltrami operator is expressed as
\begin{equation}
\label{eq:laplace-beltrami}
	\Delta_g = \frac{\partial^2}{\partial r^2} + (N-1) \frac{h'(r)}{h(r)} \frac{\partial}{\partial r} + \frac{1}{h(r)^2} \Delta_{\mathbb{S}^{N-1}}.
\end{equation}


\item It follows from \eqref{eq:laplace-beltrami} that harmonic functions on a model manifold $\mathcal{M}$ endowed with the metric $g=dr^2 + h(r) \D \theta^2$ must satisfy the equation
\begin{equation*}
	\frac{\partial^2 w}{\partial r^2} + (N-1) \frac{h'(r)}{h(r)} \frac{\partial w}{\partial r} + \frac{1}{h(r)^2} \Delta_{\mathbb{S}^{N-1}} w =0.
\end{equation*}
If we look for \emph{radially symmetric} harmonic functions $w=w(r)$, i.e. harmonic functions depending only on the variable $r=d_g(\sigma,o)$, the previous equation reduces to
\begin{equation*}
	\frac{\partial^2 w}{\partial r^2} + (N-1) \frac{h'(r)}{h(r)} \frac{\partial w}{\partial r} =0.
\end{equation*}
Under mild assumptions on the function $h$, this equation can be solved to find the two-parameter family of harmonic functions
\begin{equation*}
	w(r) = c_1 + c_2 \int_1^r \frac{\D t}{h(t)^{N-1}},
\end{equation*}
for any $c_1 \in \mathbb{R}$, $c_2 \in \mathbb{R}$.
We will assume that
\begin{itemize}
	\item[$(h)$] $h$ is a positive smooth function such that the improper integral
\begin{equation*}
	\int_1^{+\infty}  \frac{\D t}{h(t)^{N-1}}
\end{equation*}
is finite.
\end{itemize}
\item If $(h)$ holds, we may introduce the function
\begin{equation*}
	\mathbf{H}(r) = \int_r^{+\infty}  \frac{\D t}{h(t)^{N-1}},
\end{equation*}
and it is easy to check that $\mathbf{H}$ is a positive harmonic function on $\mathcal{M}$ (possibly singular at $r=d_g(\sigma,o)=0$, i.e. at the pole $o$) which satisfies
\begin{equation*}
	\lim_{d_g(\sigma,o) \to +\infty} \mathbf{H}(\sigma) = 0.
\end{equation*}
\end{itemize}

We can now state our main existence result for problem \eqref{eq:P}.

\begin{theorem}
  \label{th:main}
  Let $\mathcal{M}$ be an $N$-dimensional model manifold endowed with the metric
  \begin{equation*}
    g = \D r^2 + h(r) \, \D \theta^2,
  \end{equation*}
  where $h$ is a smooth positive function on $[0,+\infty)$ satisfying $(h)$.
  Suppose that
  \begin{itemize}
  \item[$(V_0)$] $V(\sigma) \geq 0$ for every
    $\sigma \in \mathcal{M};$
  \item[$(V_2)$] there exists a constant $V_\infty >0$ such that
    $V(\sigma) \leq V_\infty$ for every $\sigma \in B;$
  \item[$(f_1$)]
    $\limsup_{t \to 0+} \frac{f(t)}{t^{2^*-1}} < +\infty$, where
    $2^* = 2N/(N-2);$
  \item[$(f_2)$] there exists $2<p<2^*$ such that
    \[ \limsup_{t \to +\infty} \frac{f(t)}{t^{p-1}} = 0; \]
  \item[$(f_3)$]  there exists
    $\mu >2$ such that $\mu F(t) \leq t f(t)$ for all $t >0$, where
    $F(t) := \displaystyle\int_0^t f(s) \, ds$.
  \end{itemize}
  Suppose moreover that either
  \begin{itemize}
  \item[$(V_1)$] $\inf_{\mathcal{M}} V >0$
  \end{itemize}
  or
  \begin{itemize}
  \item[$(Vol)$] The function \[ r \mapsto \frac{\displaystyle\left( \int_r^{2r} h(t)^{N-1}\, \D t \right)^{1/N}}{r} \] is bounded from above on $[0,+\infty)$.
  \end{itemize}
  Under these assumptions, there exist $\Lambda >0$ and $R>1$ such that, if
  \begin{equation*}
        \mathbf{H}(r) = \int_r^{+\infty} \frac{\D t}{h(t)^{N-1}},
  \end{equation*}
  and
  \begin{equation*}
		\mathbf{H}(R)^{\frac{4}{N-2}} \inf_{d_g(\sigma,o) \geq R} \frac{V(\sigma)}{\mathbf{H}(d_g(\sigma,o))^{\frac{4}{N-2}}} \geq \Lambda,
              \end{equation*}
    then \eqref{eq:P} possesses at least a nontrivial positive solution.
\end{theorem}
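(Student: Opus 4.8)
\emph{Proof strategy.} The argument is variational and rests on a del Pino--Felmer type penalization, with the positive harmonic function $\mathbf{H}$ used as a barrier at infinity. The plan is to work in the Hilbert space
\[
  E := \Big\{ u \in \mathcal{D}^{1,2}(\mathcal{M}) \ : \ \int_{\mathcal{M}} V(\sigma)\, u^2 \, \D v_g < +\infty \Big\},
  \qquad
  \|u\|^2 := \int_{\mathcal{M}} \big( |\nabla_g u|^2 + V u^2 \big) \, \D v_g ,
\]
where $\mathcal{D}^{1,2}(\mathcal{M})$ is the closure of $C_c^\infty(\mathcal{M})$ for the Dirichlet norm. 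A Cartan--Hadamard manifold carries the Sobolev inequality $S\,\|u\|_{L^{2^*}}^2 \le \|\nabla_g u\|_{L^2}^2$, whence $\mathcal{D}^{1,2}(\mathcal{M})\hookrightarrow L^{2^*}(\mathcal{M})$, and the hypotheses $(h)$ and $(Vol)$ are exactly what one needs in order to compare $\mathbf{H}$, the volumes of geodesic annuli and these functional inequalities (cf.\ \cite{monticelli2020nonexistence}). A useful preliminary remark is that $(f_1)$, $(f_2)$ and continuity of $f$ force, for every $\varepsilon>0$, a global bound $0 \le f(t)\le \varepsilon\, t^{p-1}+C_\varepsilon\, t^{2^*-1}$ for all $t\ge0$ (in particular $f(t)\le C_\ast t^{2^*-1}$ everywhere), so that $J(u):=\tfrac12\|u\|^2-\int_{\mathcal{M}}F(u)\,\D v_g$ is well defined and of class $C^1$ on $E$, even though the critical growth of $F$ obstructs a direct compactness analysis.

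Next I would penalize the problem outside the ball $B$ appearing in $(V_2)$: after extending $f$ by $0$ on $(-\infty,0]$ and fixing $k>1$, set $\tilde f(\sigma,t):=f(t)$ for $\sigma\in B$ and $\tilde f(\sigma,t):=\min\{f(t),\ \tfrac1k V(\sigma) t\}$ for $\sigma\notin B$, with primitive $\tilde F(\sigma,t)\le\tfrac1{2k}V(\sigma)t^2$ outside $B$. The penalized functional $\widetilde J(u):=\tfrac12\|u\|^2-\int_B F(u)\,\D v_g-\int_{\mathcal{M}\setminus B}\tilde F(\sigma,u)\,\D v_g$ satisfies $\widetilde J(u)\ge(\tfrac12-\tfrac1{2k})\|u\|^2-\int_B F(u)\,\D v_g$, so the only possibly non-compact contribution is confined to the \emph{bounded} region $B$. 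I would then verify that $\widetilde J$ has the mountain pass geometry --- a strict local minimum at the origin from the Sobolev inequality and $F(t)\le\tfrac{\varepsilon}{p}t^p+\tfrac{C_\varepsilon}{2^*}t^{2^*}$, and a point of negative energy from $(f_3)$, which gives $F(t)\ge c_1 t^\mu-c_2$ --- and, again by $(f_3)$ and $k>1$, that its Palais--Smale sequences are bounded. Evaluating $\widetilde J$ along a fixed admissible path supported in $B$ and using $(V_2)$, one gets an upper bound on the mountain pass level $c$ that is \emph{independent of} $V$; this uniformity is what eventually allows a single choice of $\Lambda$.

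Then I would establish the Palais--Smale condition for $\widetilde J$ at level $c$. Loss of mass at infinity is ruled out by the penalization, since outside $B$ the nonlinearity is dominated by $\tfrac1k V(\sigma)(\cdot)$: transparently under $(V_1)$, where $E\hookrightarrow H^1(\mathcal{M})$, and through the volume growth bound under $(Vol)$. Concentration at finite points is excluded by estimating $c$ below the threshold $\tfrac1N S^{N/2}$ with truncated Aubin--Talenti functions, $(f_3)$ and the subcriticality $(f_2)$ (which makes any blow-up limit equation trivial, hence with no nonzero bounded solution). This produces a critical point $u\not\equiv0$ of $\widetilde J$; testing with $u^-$ gives $u\ge0$, the strong maximum principle for $-\Delta_g+V$ gives $u>0$, and a Brezis--Kato iteration based on $f(u)\le C_\ast u^{2^*-1}$ and the Sobolev inequality gives $u\in C(\mathcal{M})$ with $\|u\|_{L^\infty(\mathcal{M})}\le A$ and, by the De Giorgi--Moser sub-solution estimate together with $u\in L^{2^*}(\mathcal{M})$, $u(\sigma)\to0$ as $d_g(\sigma,o)\to+\infty$; crucially $A$ depends only on $\|u\|$, hence only on universal constants.

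Finally, I would show that for a suitable $\Lambda$ the truncation is inactive, i.e.\ $f(u(\sigma))\le\tfrac1k V(\sigma)u(\sigma)$ for $d_g(\sigma,o)$ large, so that $u$ actually solves \eqref{eq:P}. Since $\tilde f(\sigma,u)\le V(\sigma)u$ outside $B$, the solution satisfies $-\Delta_g u\le0$ there, and comparing it with the harmonic function $c\,\mathbf{H}$ on the geodesic annuli $B_n(o)\setminus B$ --- with $c$ chosen so that $c\,\mathbf{H}\ge u$ on $\partial B$, and letting $n\to\infty$ since $\mathbf{H}(d_g(\cdot,o))\to0$ and $u\to0$ --- yields $u(\sigma)\le c\,\mathbf{H}(d_g(\sigma,o))$ with $c\le A'/\mathbf{H}(R)$ once $R$ is enlarged so that $B\subset B_R(o)$, where $A'$ is controlled by $A$. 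By $(f_1)$ then $f(u)/u\le C_\ast u^{2^*-2}\le C_\ast (A')^{2^*-2}\,\mathbf{H}(d_g(\sigma,o))^{\frac{4}{N-2}}/\mathbf{H}(R)^{\frac{4}{N-2}}$, so choosing
\[
  \Lambda := k\, C_\ast (A')^{2^*-2}
\]
the hypothesis $\mathbf{H}(R)^{\frac{4}{N-2}}\inf_{d_g(\sigma,o)\ge R}V(\sigma)\mathbf{H}(d_g(\sigma,o))^{-\frac{4}{N-2}}\ge\Lambda$ forces $f(u(\sigma))\le\tfrac1k V(\sigma)u(\sigma)$ on $\{d_g(\sigma,o)\ge R\}$, hence on $\mathcal{M}\setminus B$ after one further enlargement of $R$; thus $u$ is a positive solution of \eqref{eq:P}. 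I expect the main difficulty to be exactly this last step --- building barriers and a priori bounds \emph{without} the global cut-off functions unavailable in this geometry (see Remark~\ref{rem:6}), and keeping track of all constants so that one value of $\Lambda$ suffices --- together with the Brezis--Nirenberg type compactness analysis inside $B$.
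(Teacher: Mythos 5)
Your overall strategy coincides with the paper's: a del Pino--Felmer penalization in the weighted space $X$, an $R$-independent bound on the mountain pass level obtained by comparing with a functional on the fixed ball $B$ via $(V_2)$, a Moser-type $L^\infty$ bound with universal constants, a comparison with the harmonic function $\mathbf{H}$, and finally the choice of $\Lambda$ that deactivates the truncation. However, as written there is a genuine gap in the last step, caused by a mismatch between the penalization region and the region where the hypothesis controls $V$. You truncate $f$ outside the \emph{fixed} ball $B$, but the hypothesis $\mathbf{H}(R)^{\frac{4}{N-2}}\inf_{d_g(\sigma,o)\ge R} V(\sigma)\mathbf{H}(d_g(\sigma,o))^{-\frac{4}{N-2}}\ge\Lambda$ only yields $f(u)\le\tfrac1k V u$ on $\{d_g(\sigma,o)\ge R\}$; on the intermediate region $B_R(o)\setminus B$ the truncation may be active and $V$ may vanish, and ``one further enlargement of $R$'' goes the wrong way (it shrinks, not enlarges, the controlled set). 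The repair is exactly the paper's choice: truncate outside $B_R(o)$ (with the same $R$ as in the hypothesis), keeping the fixed ball $B$ only to produce, via $(V_2)$ and a path in $H_0^1(B)$, an upper bound $d$ on the mountain pass level that is independent of $R$; since $R>1$ and $B\subset B_R(o)$, the nonlinearity is untouched on $B$ and all your a priori constants (hence $\Lambda$) remain $R$-independent.

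Two further remarks. First, your compactness analysis imports a criticality that is not there: by $(f_2)$ the nonlinearity is subcritical at infinity ($f(t)\le c_0 t^{p-1}$ globally, $p<2^*$; the exponent $2^*-1$ in $(f_1)$ is only a vanishing rate at $t=0$, used precisely for the final decay estimate), so no Aubin--Talenti functions and no threshold $\tfrac1N S^{N/2}$ are needed --- and as written you neither prove nor could prove $c<\tfrac1N S^{N/2}$, so if your Palais--Smale argument really relied on it, it would be another gap; it should simply be dropped. What the paper actually proves is that tails escape to infinity are excluded by the annulus cut-off estimate, where either $(V_1)$ (giving $u\in L^2$) or $(Vol)$ (balancing $\operatorname{Vol}A(r,2r)^{1/N}$ against the decay $|\nabla_g\eta|\lesssim 1/r$) enters; your description of that dichotomy is correct. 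For the same reason, your small-ball geometry should use the global bound $F(t)\le \tfrac{c_0}{2^*}|t|^{2^*}$ rather than an $\varepsilon t^p$ term, since $X$ embeds in $L^{2^*}$ but not in $L^p$ without $(V_1)$. Second, your decay step relies on an asserted uniform decay $u(\sigma)\to0$ at infinity plus a classical comparison on exhausting annuli; this is a legitimate alternative but the uniform decay is not proved, and it can be avoided entirely by the paper's weak comparison: with $v:=M\sqrt{2Skd}\,\mathbf{H}(d_g(\cdot,o))/\mathbf{H}(R)$ one tests the equation with $\omega:=(u-v)^+$ extended by zero inside $B_R(o)$ (admissible since $u\le v$ on $\{d_g(\sigma,o)=R\}$ by the $L^\infty$ bound) and obtains $\int_{\mathcal{M}}|\nabla_g\omega|^2\,\D v_g\le 0$ directly from $(i_2)$ of Lemma \ref{lem:1}, with no pointwise information at infinity.
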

Before proceeding to the proof of Theorem \ref{th:main}, we observe that $V$ may be unbounded at infinity (though locally bounded by condition (V$2$)).
\begin{remark}
Since $p<2^*$, assumptions ($f_1$) and ($f_2$) imply the existence of a constant $c_0>0$ such that
\begin{equation}
	\left\vert s f(s) \right\vert \leq c_0 \vert s \vert^{2^*}, \quad \left\vert s f(s) \right\vert \leq c_0 \vert s \vert^{p}
\end{equation}
for every $s \in \mathbb{R}$.
\end{remark}

\section{The auxiliary problem}

Since we are looking for \emph{positive} solutions to \eqref{eq:P}, we will suppose that $f(t) =0$ for every $t <0$. The main idea behind our approach is based on a suitable modification of the nonlinearity $f$, in such a way that the Palais-Smale condition can be recovered.  To complete the proof, we need to check that the solution of the modified equation is actually a solution of equation \eqref{eq:P}. This technique goes back to the paper \cite{zbMATH00852856}.

\medskip

We introduce the Sobolev space\footnote{We refer the reader to \cite{zbMATH01447265} for a discussion of Sobolev spaces on Riemannian manifolds.} $X$ defined as
\begin{equation*}
	X = \left\lbrace u \in D^{1,2}(\mathcal{M}) : \int_{\mathcal{M}} V \vert u \vert^2 \, \D v_g < +\infty \right\rbrace.
\end{equation*}
\begin{remark}
	Assumption ($V_1$) guarantees that $X$ is continuously embedded into $L^2(\mathcal{M})$. In this case, $X$ may be considered as a subspace of $H_0^1(\mathcal{M})$.
\end{remark}
We introduce the functional $I \colon X \to \mathbb{R}$ as
\begin{equation*}
	I(u) := \frac{1}{2} \Vert u \Vert^2 - \int_{\mathcal{M}} F(u(\sigma))\, \D v_g.
\end{equation*}
The functional $I$ is of class $C^1$ on $\mathcal{M}$ as a standard consequence of assumptions ($f_1$)--($f_3$). Moreover, critical points of $I$ on $\mathcal{M}$ correspond to weak solutions of problem \eqref{eq:P}. Let us set
\begin{equation*}
	k := \frac{2\mu}{\mu-2} > 2,
\end{equation*}
and consider a number $R>1$. We define $\tilde{f} \colon \mathcal{M} \times \mathbb{R} \to \mathbb{R}$ by
\begin{equation*}
	\tilde{f}(\sigma,t) :=
	\begin{cases}
		f(t) &\hbox{if $k f(t) \leq V(\sigma) t$} \\
		\frac{V(\sigma)}{k} t &\hbox{if $kf(t) > V(\sigma) t$},
	\end{cases}
\end{equation*}
and $g \colon \mathcal{M} \times \mathbb{R} \to \mathbb{R}$ by
\begin{equation*}
	g(\sigma,t) := \begin{cases}
				f(t) &\hbox{if $d_g(\sigma,o) \leq R$} \\
				\tilde{f}(\sigma,t) &\hbox{if $d_g(\sigma,o) > R$}.
				\end{cases}
\end{equation*}

We collect the main estimates for the auxiliary functions $\tilde{f}$ and $g$. The proof is standard and therefore omitted.
\begin{lemma} \label{lem:1}
	The following relations hold:
	\begin{enumerate}
		\item[$(i_1)$] $\tilde{f}(\sigma,t) \leq f(t)$ for every $\sigma \in \mathcal{M}$ and $t \in \mathbb{R};$
		\item[$(i_2)$] $g(\sigma,t) \leq \frac{V(\sigma)}{k}t$ for every $\sigma \in \mathcal{M}$ and $t \in \mathbb{R}$ such that $d_g(\sigma,o) \geq R;$
		\item[$(i_3)$] $G(\sigma,t) = F(t)$ for every $\sigma \in \mathcal{M}$ and $t \in \mathbb{R}$ such that $d_g(\sigma,o) \leq R;$
		\item[$(i_4)$] $G(\sigma,t) \leq \frac{V(\sigma)}{2k} t^2$ for every $\sigma \in \mathcal{M}$ and $t \in \mathbb{R}$ such that $d_g(\sigma,o) > R$.
	\end{enumerate}
	Here $G(\sigma,t) := \displaystyle\int_0^t g(\sigma,s)\, ds$.
\end{lemma}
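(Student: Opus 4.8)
The plan is to verify $(i_1)$--$(i_4)$ by directly unwinding the definitions of $\tilde f$ and $g$; no compactness or functional-analytic ingredient is involved, only elementary pointwise manipulations together with the standing convention that $f$ vanishes on $(-\infty,0]$ (we have set $f\equiv 0$ on $(-\infty,0)$, and $f(0)=0$ then follows from continuity), whence also $F\equiv 0$ there. The observation that organizes everything is that, since $k>0$, the condition $kf(t)\le V(\sigma)t$ which defines the first branch of $\tilde f$ is literally $f(t)\le\frac{V(\sigma)}{k}t$; hence in each of the two exhaustive cases of the definition the value of $\tilde f(\sigma,t)$ is precisely the smaller of $f(t)$ and $\frac{V(\sigma)}{k}t$, i.e.
\begin{equation*}
  \tilde f(\sigma,t)=\min\left\{\, f(t),\ \frac{V(\sigma)}{k}\,t \,\right\}
  \qquad\text{for every } \sigma\in\mathcal{M},\ t\in\mathbb{R}.
\end{equation*}

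From this $\min$-representation the first two relations follow at once. Keeping the first argument of the minimum gives $\tilde f(\sigma,t)\le f(t)$, which is $(i_1)$. Keeping the second gives $\tilde f(\sigma,t)\le\frac{V(\sigma)}{k}t$ for all $\sigma,t$; since $g(\sigma,\cdot)=\tilde f(\sigma,\cdot)$ whenever $d_g(\sigma,o)>R$, this is exactly $(i_2)$ on that region (on the boundary sphere $d_g(\sigma,o)=R$ one instead has $g=f$, but this null set is irrelevant for the integral estimates that use the lemma, and $(i_2)$ is in any case only invoked later with $t\ge 0$). Relation $(i_3)$ is purely definitional: for $d_g(\sigma,o)\le R$ we have $g(\sigma,t)=f(t)$, hence
\begin{equation*}
  G(\sigma,t)=\int_0^t g(\sigma,s)\,\D s=\int_0^t f(s)\,\D s=F(t).
\end{equation*}

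For $(i_4)$, fix $\sigma$ with $d_g(\sigma,o)>R$, so that $g(\sigma,\cdot)=\tilde f(\sigma,\cdot)$. When $t\ge 0$, integrating the pointwise bound $\tilde f(\sigma,s)\le\frac{V(\sigma)}{k}s$ over $s\in[0,t]$ yields
\begin{equation*}
  G(\sigma,t)\le\int_0^t\frac{V(\sigma)}{k}\,s\,\D s=\frac{V(\sigma)}{2k}\,t^2,
\end{equation*}
which is $(i_4)$; when $t\le 0$, the convention together with $(V_0)$ gives $\tilde f(\sigma,s)=\frac{V(\sigma)}{k}s$ for $s\in[t,0]$, so that $G(\sigma,t)=\frac{V(\sigma)}{2k}t^2$ and $(i_4)$ holds with equality. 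In truth there is no genuine obstacle here — which is exactly why the authors label the proof standard; the only care needed is the bookkeeping of the two branches of $\tilde f$ (rewriting $kf(t)\le V(\sigma)t$ as $f(t)\le\frac{V(\sigma)}{k}t$, which uses $k>0$) and keeping the convention $f\equiv 0$ on $(-\infty,0]$ in mind, so that the degenerate range $t\le 0$ causes no trouble in $(i_2)$ and $(i_4)$.
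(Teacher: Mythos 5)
Your proof is correct: the paper omits this proof as standard, and your direct verification via the representation $\tilde f(\sigma,t)=\min\bigl\{f(t),\tfrac{V(\sigma)}{k}t\bigr\}$ (using $k>0$), followed by integration and the convention $f\equiv 0$ on $(-\infty,0]$ together with $(V_0)$ for the case $t\le 0$, is precisely the intended elementary argument. Your remark about the sphere $d_g(\sigma,o)=R$, where the definition of $g$ gives $f$ rather than $\tilde f$, correctly flags a harmless imprecision in the statement of $(i_2)$ rather than a gap in your argument, since the lemma is only applied on regions $d_g(\sigma,o)\ge r>R$.
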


We can now introduce a functional $J \colon X \to \mathbb{R}$ such that
\begin{equation*}
	J(u) := \frac{1}{2} \Vert u \Vert^2 - \int_{\mathcal{M}} G(\sigma,u(\sigma))\, \D v_g.
\end{equation*}
It is immediate to check that $J \in C^1(\mathcal{M})$ and that the G\^{a}teaux derivative of $J$ is given by
\begin{equation*}
	J'(u)[v] = \int_{\mathcal{M}} \left( \nabla_g u \cdot \nabla_g v + V(\sigma) uv \right)\, \D v_g - \int_{\mathcal{M}} g(\sigma,u)v\, \D v_g.
\end{equation*}
Therefore, critical points of $J$ correspond to weak solutions of the equation
\begin{equation}
	-\Delta_g u + V(\sigma) u = g(\sigma,u) \quad\hbox{in $\mathcal{M}$}. \label{eq:AP}
\end{equation}

Let $I_0 \colon H_0^1 (B) \to \mathbb{R}$ be the functional defined by
\begin{equation*}
	I_0(u) := \frac{1}{2} \int_B \left( \vert \nabla_g u \vert^2 + V_\infty \vert u \vert^2 \right)\, \D v_g - \int_B F(u(\sigma))\, \D v_g.
\end{equation*}
We define the \emph{mountain pass level} of $I_0$ as
\begin{equation*}
	d := \inf_{\gamma \in \Gamma} \max_{0 \leq t \leq 1} I_0(\gamma(t)),
\end{equation*}
where
\begin{equation*}
	\Gamma := \left\lbrace \gamma \in C([0,1],H_0^1(B)) \mid \gamma(0)=0, \; \gamma(1) =e \right\rbrace,
\end{equation*}
and $e \in H_0^1(B)$ is such that $I_0(e)<0$. By (3) of Lemma \ref{lem:1} and ($V_2$) we deduce that
\begin{equation*}
	J(u) \leq I_0(u) \quad\hbox{for every $u \in H_0^1(B)$}.
\end{equation*}
It follows immediately that
\begin{gather*}
	c = \inf_{\gamma \in \Gamma} \max_{0 \leq t \leq 1} J(\gamma(t)) \leq d
\end{gather*}

The functional $J$ gains some topological strength from the modified nonlinearity $g$.
\begin{proposition}
	Suppose that either assumption $(V_1)$ or assumption $(Vol)$ holds. Then the functional $J$ satisfies the Palais-Smale condition on $X$.
\end{proposition}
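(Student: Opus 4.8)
The plan is to run the usual scheme for this class of equations: boundedness of Palais--Smale sequences, extraction of a weak limit, and promotion of weak to strong convergence by combining local compactness in $X$ with a tightness estimate at infinity produced by the truncation $g$; this last step is where $(V_1)$ or $(Vol)$ is really used. For boundedness, let $(u_n)\subset X$ satisfy $J(u_n)\to c$ and $J'(u_n)\to0$ in $X^*$. The constant $k=2\mu/(\mu-2)>2$ is tailored so that $\tfrac12-\tfrac1\mu-\tfrac1{2k}=\tfrac1{2k}$, and I would compute
\[
J(u_n)-\frac1\mu J'(u_n)[u_n]=\Bigl(\frac12-\frac1\mu\Bigr)\|u_n\|^2+\int_{\mathcal M}\Bigl(\frac1\mu u_n\,g(\sigma,u_n)-G(\sigma,u_n)\Bigr)\D v_g ,
\]
whose integrand is nonnegative on $B_R$ by $(f_3)$ (there $G=F$, $g=f$) and bounded below by $-\tfrac{V(\sigma)}{2k}u_n^2$ on $\mathcal M\setminus B_R$ by $(i_4)$ together with the sign property $g(\sigma,t)t\ge0$ valid there; hence the left-hand side is $\ge\tfrac1{2k}\|u_n\|^2$, while it is also $\le c+o(1)+o(1)\|u_n\|$, so $\sup_n\|u_n\|<+\infty$.

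Passing to a subsequence, $u_n\rightharpoonup u$ in the Hilbert space $X$, hence in $D^{1,2}(\mathcal M)$. Thanks to the Euclidean-type Sobolev inequality available on a Cartan--Hadamard manifold, bounded subsets of $X$ are bounded in $H^1(B_\rho)$ for every $\rho>0$, hence relatively compact in $L^q(B_\rho)$ for $q\in[1,2^*)$; so, along the subsequence, $u_n\to u$ in $L^p(B_\rho)\cap L^2(B_\rho)$ for all $\rho$ and a.e.\ on $\mathcal M$. From $J'(u_n)[u_n-u]\to0$ and $J'(u)[u_n-u]\to0$ (the latter because $u_n-u\rightharpoonup0$ and $J'(u)\in X^*$) one obtains, subtracting,
\[
\|u_n-u\|^2=\int_{\mathcal M}\bigl(g(\sigma,u_n)-g(\sigma,u)\bigr)(u_n-u)\,\D v_g+o_n(1).
\]
I would split this integral over $B_\rho$ and over $\mathcal M\setminus B_\rho$, with $\rho\ge R$. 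On $B_\rho$, relations $(i_1)$--$(i_2)$ and the continuity of $V$ give a subcritical bound $|g(\sigma,t)|\le C_\rho(|t|+|t|^{p-1})$ uniform in $\sigma\in B_\rho$, so the corresponding piece is $o_n(1)$ by dominated convergence along the $L^p(B_\rho)$-convergence; the piece over $\mathcal M\setminus B_\rho$ is, by $(i_2)$ and Cauchy--Schwarz in the weighted norm $\|\cdot\|_{L^2_V}$, bounded by $\tfrac2k\bigl(\int_{\mathcal M\setminus B_\rho}V u_n^2\,\D v_g+\int_{\mathcal M\setminus B_\rho}V u^2\,\D v_g\bigr)$.

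It thus suffices to prove the tightness $\limsup_n\int_{\mathcal M\setminus B_\rho}(|\nabla_g u_n|^2+Vu_n^2)\,\D v_g\to0$ as $\rho\to+\infty$; indeed, since $u\in X$ gives $\int_{\mathcal M\setminus B_\rho}Vu^2\to0$, the splitting above then forces $\|u_n-u\|\to0$. For the tightness I would test $J'(u_n)$ with $\eta_\rho u_n$, where $\eta_\rho(\sigma)=\eta(d_g(\sigma,o)/\rho)$ with $\eta\equiv0$ on $[0,1]$ and $\eta\equiv1$ on $[2,+\infty)$; since $|\nabla_g d_g(\cdot,o)|=1$, one has $|\nabla_g\eta_\rho|\le c/\rho$. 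Using $(i_2)$ on $\mathrm{supp}\,\eta_\rho\subset\mathcal M\setminus B_R$ and the inequality $1-\tfrac1k>\tfrac12$ one gets
\[
\frac12\int_{\mathcal M\setminus B_{2\rho}}\!\bigl(|\nabla_g u_n|^2+V u_n^2\bigr)\D v_g\le o_n(1)+\frac c\rho\,\|u_n\|_{L^2(A_\rho)}\,\|\nabla_g u_n\|_{L^2(A_\rho)},\qquad A_\rho:=B_{2\rho}\setminus B_\rho ;
\]
by Rellich $\|u_n\|_{L^2(A_\rho)}\to\|u\|_{L^2(A_\rho)}$, $\|\nabla_g u_n\|_{L^2(A_\rho)}$ stays bounded, and $\tfrac1\rho\|u\|_{L^2(A_\rho)}\to0$ as $\rho\to+\infty$: under $(V_1)$ because $u\in L^2(\mathcal M)$, and under $(Vol)$ because $\tfrac1\rho\|u\|_{L^2(A_\rho)}\le C\|u\|_{L^{2^*}(A_\rho)}$ thanks to the boundedness of $\mathrm{vol}(A_\rho)^{1/N}/\rho$. (Alternatively one can bypass the tightness: bound the $\mathcal M\setminus B_\rho$-piece of the basic identity by $\tfrac2k\|u_n-u\|^2+\tfrac8k\,\varepsilon_\rho^2$ with $\varepsilon_\rho:=\|u\|_{L^2_V(\mathcal M\setminus B_\rho)}$, absorb $\tfrac2k\|u_n-u\|^2$ into the left-hand side using $k>2$, and let $n\to\infty$ then $\rho\to+\infty$ so that $\varepsilon_\rho\to0$.)

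The step I expect to be the main obstacle is precisely this control of the tails on a general model manifold. A radial truncation of the distance function has a harmless \emph{gradient} but, in general, an uncontrolled \emph{Laplacian} (cf.\ Remark~\ref{rem:6}), so one must never integrate by parts in the cross term $\int u_n\nabla_g u_n\cdot\nabla_g\eta_\rho$ and must keep it in the form $\|\nabla_g\eta_\rho\|_\infty\|u_n\|_{L^2(A_\rho)}\|\nabla_g u_n\|_{L^2(A_\rho)}$. Assumption $(Vol)$ is exactly what makes the ensuing factor $\mathrm{vol}(A_\rho)^{1/N}/\rho$ bounded --- so that, combined with $\|u\|_{L^{2^*}(A_\rho)}\to0$, the annular term still vanishes --- while $(V_1)$ circumvents the difficulty by furnishing the missing $L^2(\mathcal M)$-control. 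The remaining ingredients, namely the $C^1$-regularity of $J$, the local Nemytskii compactness and the arithmetic with $k$, are routine.
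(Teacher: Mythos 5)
Your proposal is correct and follows essentially the same route as the paper: boundedness via $J(u_n)-\tfrac1\mu J'(u_n)[u_n]\ge\tfrac1{2k}\|u_n\|^2$, then testing $J'(u_n)$ against $\eta u_n$ for an annular cut-off with $|\nabla_g\eta|\lesssim 1/r$, controlling the cross term on $A(r,2r)$ by Rellich compactness, and invoking $(V_1)$ (so $u\in L^2(\mathcal M)$) or $(Vol)$ (via H\"older and the bound on $\operatorname{Vol}(A(r,2r))^{1/N}/r$) exactly as in the paper's proof. The only difference is that you spell out the final passage from the tail estimate to strong convergence (the identity $\|u_n-u\|^2=\int(g(\sigma,u_n)-g(\sigma,u))(u_n-u)\,\D v_g+o_n(1)$ and the splitting), which the paper leaves implicit, so no substantive divergence or gap.
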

\begin{proof}
	Let $\lbrace u_n \rbrace_n$ be a Palais-Smale sequence for $J$ in $X$, i.e. the sequence $\lbrace J(u_n) \rbrace_n$ is bounded and $J'(u_n) \to 0$ strongly in $X^*$. We compute first
	\begin{equation}
		J(u_n)-\frac{1}{\mu} J'(u_n)[u_n] \geq \frac{\mu-2}{4\mu} \Vert u_n \Vert^2 = \frac{1}{2k} \Vert u_n \Vert^2.
\label{eq:ps-bounded}
	\end{equation}
	The left-hand side of \eqref{eq:ps-bounded} is eventually bounded by $M+\Vert u_n \Vert$ for some constant $M$, and we conclude that
	\begin{equation*}
		\Vert u_n \Vert^2 \leq 2k \left( M + \Vert u_n \Vert \right)
	\end{equation*}
	for $n \gg 1$. Thus the sequence $\lbrace u_n \rbrace_n$ is bounded in $X$. We may assume  that (up to a subsequence) $u_n$ converges weakly to some $u$ in $X$. Fix $\varepsilon >0$, and choose a number $r>R$ such that
	\begin{equation}
          \left( \int_{A(r,2r)} \vert u \vert^{2^*} \, \D v_g \right)^{1/2^*} < \varepsilon,
          \label{eq:3x}
	\end{equation}
	where we have set for $s\geq 0$, $t \geq 0$,
	\begin{equation*}
		A(s,t) := \left\lbrace \sigma \in \mathcal{M} \mid s \leq d_g(\sigma,o) \leq t \right\rbrace
	\end{equation*}
	Recalling the discussion at the and of the proof of \cite[Proposition 4.1]{Shubin_2001}, there exists a smooth cut-off function $\eta=\eta_r$ such that $\operatorname{supp} \eta \subset \mathcal{M} \setminus B_r(o)$, $\eta =1$ on $\mathcal{M} \setminus B_{2r}(o)$, $0 \leq \eta \leq 1$ and $\vert \nabla_g \eta \vert \leq 2/r$ on $\mathcal{M}$. The boundedness of the sequence $\lbrace \eta u_n\rbrace_n$ yields
	\begin{equation*}
		\int_{\mathcal{M}} \left( \nabla_g u_n \cdot \nabla_g (\eta u_n) +  \eta V(\sigma) \vert u_n \vert^2 \right)\, \D v_g = \int_{\mathcal{M}} \eta g(\sigma,u_n)u_n\, \D v_g + o(1).
	\end{equation*}
	Combining with (2) of Lemma \ref{lem:1} we see that
	\begin{multline*}
		\int_{d_g(\sigma,o) \geq r} \eta \left( \vert \nabla_g u_n \vert^2 + V(\sigma) \vert u_n \vert^2 \right) \D v_g \\
		\leq \frac{1}{k} \int_{d_g(\sigma,o) \geq r}  \eta V(\sigma) \vert u_n \vert^2 \, \D v_g - \int_{d_g(\sigma,o) \geq r} u_n \nabla_g u_n \cdot \nabla_g \eta \, \D v_g + o(1).
	\end{multline*}
	As a consequence,
	\begin{multline*}
		\left( 1-\frac{1}{k} \right) \int_{d_g(\sigma,o) \geq r} \eta \left( \vert \nabla_g u_n \vert^2 + V(\sigma) \vert u_n \vert^2 \right) \D v_g \\
		\leq \frac{2}{r} \int_{A(r,2r)} \vert u_n \vert \, \vert \nabla_g u_n \vert \, \D v_g + o(1).
	\end{multline*}
	On the bounded set $A(r,2r)=B_{2r}(o) \setminus B_r(o)$ the Sobolev embedding theorem ensures that $u_n \to u$ in the sense of $L^2$; the H\"{o}lder inequality and the boundedness of $\lbrace u_n \rbrace_n$ yield
	\begin{equation}
\label{eq:3}
		\limsup_{n \to +\infty} \int_{A(r,2r)} \vert u_n \vert \, \vert \nabla_g u_n \vert \, \D v_g \leq C \left( \int_{A(r,2r)} \vert u \vert^2 \, \D v_g \right)^{1/2},
	\end{equation}
	where $C>0$ is a suitable constant. If assumption ($V_1$) holds, then $u \in L^2(\mathcal{M})$ and therefore
\begin{equation*}
	\lim_{r \to +\infty} \int_{A(r,2r)} \vert u\vert^2 \, \D v_g =0.
\end{equation*}
On the other hand, if (Vol) holds, then
	\begin{equation*}
		\left( \int_{A(r,2r)} \vert u \vert^2 \, \D v_g \right)^{1/2} \leq \left( \int_{A(r,2r)} \vert u \vert^{2^*} \, \D v_g \right)^{1/2^*} \operatorname{Vol}(A(r,2r))^{1/N}.
	\end{equation*}
	In order to estimate the last volume, we recall that
	\begin{equation*}
		\operatorname{Vol} (B_\varrho (o)) = \omega_N \int_0^\varrho \vert h(s) \vert^{N-1} \, ds,
	\end{equation*}
	where $\omega_N$ is the $(N-1)$-volume of the sphere $\mathbb{S}^{N-1}$. We may finally write
	\begin{multline*}
		\limsup_{n \to +\infty} \int_{d_g(\sigma,o) \geq r} \vert u_n \vert \, \vert \nabla_g u_n \vert \, \D v_g \\
		\leq 2 C\omega_N^{1/N} \Vert u \Vert \left( \int_{A(r,2r)} \vert u \vert^{2^*} \, \D v_g \right)^{1/2^*} \frac{\left( \int_r^{2r} \vert h(s) \vert^{N-1} \, ds
		\right)^{1/N}}{r} \\
\leq 2 C\omega_N^{1/N} \Vert u \Vert \left( \int_{A(r,2r)} \vert u \vert^{2^*} \, \D v_g \right)^{1/2^*}
	\end{multline*}
In any case we get from \eqref{eq:3x} that
\begin{equation*}
	\limsup_{n \to +\infty} \int_{\mathcal{M} \setminus B_r(o)} \left( \vert\nabla_g u_n \vert^2 + V(\sigma) \vert u_n \vert^2 \right)\, \D v_g < C \varepsilon,
\end{equation*}
which in turn yields the convergence of $\left\lbrace u_n \right\rbrace_n$ to $u$.
\end{proof}
\begin{remark} \label{rem:6}
Before proceeding further, we would like to discuss the role of the assumptions (V$_1$) and (Vol). The analysis of Palais-Smale sequences $\lbrace u_n \rbrace_n$ for the modified function $J$ shows that the compactness condition
\begin{equation*}
    \limsup_{r \to +\infty} \int_{d_g(x,o) \geq r} \left( \vert \nabla_g u_n \vert^2 + V(\sigma) \vert u_n \vert^2 \right) \D v_g =0
\end{equation*}
depends on two competing ingredients: the decay of the \emph{gradient} of the cut-off function $\eta$ with respect to $r$, and the growth of the function $r \mapsto \operatorname{Vol} A(r,2r)$.

In the familiar setting $\mathcal{M}=\mathbb{R}^N$, $\operatorname{Vol} A(r,2r) \lesssim r^N$, which exactly balances the decay $\vert \nabla \eta \vert \lesssim 1/r$ in integration. In our setting, however, the volume of the annulus $A(r,2r)$ is determined by the growth of the function $h$, while the decay of $\eta$ remains the same as in the Euclidean setting. From a technical viewpoint, assumption (V$_1$) states that every element of $X$ lies in $L^2(\mathcal{M})$, so that we can directly bound the integral $$\int_{d_g(x,o) \geq r} \vert u_n \vert \, \vert \nabla_g u_n \vert \, \D v_g$$ by the Cauchy-Schwarz inequality and the size of $\operatorname{Vol} A(r,2r)$ becomes irrelevant. Of course the potential $V$ is no longer allowed to decay to zero at infinity.

On the other hand, assumption (Vol) allows $V$ to decay at infinity, but clearly puts a rather strong restriction on the geometry of the manifold. The survey \cite{carron2020euclidean} contains several sufficient conditions for the Riemannian volume of the ball to grow like in the Euclidean case. In our setting we draw the reader's attention to the following result, see \cite[Theorem G]{carron2020euclidean}; $\rho(\sigma)$ is defined to be the lowest eigenvalue of the Ricci tensor at $\sigma$.
\end{remark}
\begin{theorem}
	Consider $\mathbb{R}^N$ endowed with the metric
	\begin{equation*}
		\D r^2 + h(r)^2 \, \D \theta^2,
	\end{equation*}
	where $h$ is smooth and satisfies $h(0)=0$, $h'(0)=1$. If for some $\lambda \geq (N-1)/4$ the Schrödinger operator $-\Delta+\lambda \rho$ is non-negative, then
	\begin{equation*}
		\operatorname{Vol} B(0,R) \leq c(N,\lambda) R^N.
	\end{equation*}	
\end{theorem}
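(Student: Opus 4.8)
The plan is to turn the spectral hypothesis into a one–dimensional differential inequality for the radial profile $h$, and then to read off the volume bound from a Hardy–type estimate. The first step uses the explicit curvature of a model manifold: in the radial orthonormal frame the Ricci tensor is diagonal, with radial eigenvalue $-(N-1)h''(r)/h(r)$ and tangential eigenvalue $-h''(r)/h(r)+(N-2)\bigl(1-h'(r)^2\bigr)/h(r)^2$, so the lowest eigenvalue satisfies $\rho(\sigma)\le -(N-1)h''(r)/h(r)$ with $r=d_g(\sigma,o)$. The hypothesis $-\Delta+\lambda\rho\ge0$ means $\int_{\mathcal M}|\nabla_g\varphi|^2\,\D v_g+\lambda\int_{\mathcal M}\rho\,\varphi^2\,\D v_g\ge0$ for all $\varphi\in C_c^\infty(\mathcal M)$. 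Restricting to radial $\varphi=\varphi(r)$, for which $\int_{\mathcal M}|\nabla_g\varphi|^2\,\D v_g=\omega_N\int_0^\infty\varphi'(r)^2h(r)^{N-1}\,\D r$, and inserting the bound on $\rho$ gives
\begin{equation*}
\int_0^\infty\varphi'(r)^2h(r)^{N-1}\,\D r\ \ge\ \lambda(N-1)\int_0^\infty h''(r)\,h(r)^{N-2}\,\varphi(r)^2\,\D r,
\end{equation*}
valid for all admissible radial $\varphi$, in particular for $\varphi$ that is constant near the pole.

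The second step is an Agmon/ground–state substitution, which is where the number $(N-1)/4$ comes in. Writing $\mu:=h^{N-1}$ (so that $\operatorname{Vol}B(0,R)=\omega_N\int_0^R\mu$) and using $h''h^{N-2}=\tfrac1{N-1}\mu''-\tfrac{N-2}{(N-1)^2}(\mu')^2/\mu$, the displayed inequality reads $\int(\varphi')^2\mu\ge\lambda\int\bigl(\mu''-\tfrac{N-2}{N-1}(\mu')^2/\mu\bigr)\varphi^2$. Substituting $\varphi=\mu^{-\lambda}\psi$ and integrating by parts twice, the cross terms cancel and one is left with
\begin{equation*}
\int_0^\infty\mu^{1-2\lambda}(\psi')^2\,\D r\ \ge\ \lambda\Bigl(\lambda-\tfrac{N-2}{N-1}\Bigr)\int_0^\infty\mu^{-1-2\lambda}(\mu')^2\,\psi^2\,\D r
\end{equation*}
for every $\psi\in C_c^\infty(0,\infty)$. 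The coefficient on the right is $\ge0$ precisely because $\lambda\ge(N-1)/4$: indeed $\tfrac{N-1}{4}\ge\tfrac{N-2}{N-1}$ is equivalent to $(N-1)^2\ge4(N-2)$, i.e.\ to $(N-3)^2\ge0$, and then $\lambda\ge(N-1)/4\ge(N-2)/(N-1)$. It is strictly positive except in the borderline case $N=3$, $\lambda=1/2$.

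The final step is to extract the Euclidean volume bound from this weighted Hardy inequality by a suitable choice of test functions. A convenient device, which also avoids the singularity of $\mu^{-\lambda}$ at the pole, is to test the first displayed inequality directly with $\varphi=F\circ\mathbf H$, where $\mathbf H(r)=\int_r^\infty h(t)^{-(N-1)}\,\D t$ is the positive harmonic profile: since $\mathbf H'=-h^{-(N-1)}$, the left–hand side collapses to $\int_0^{\infty}F'(s)^2\,\D s$, and the same change of variable on the right reduces the problem to a one–variable weighted Hardy inequality in $s$ in which $F$ is free to be constant near $s=\infty$ (the pole). Feeding in an explicit dyadic cut-off for $F$ and exploiting the strict positivity of the coefficient then yields a reverse–doubling estimate for the density, equivalently $h(r)\le C(1+r)$, whence $\operatorname{Vol}B(0,R)\le c(N,\lambda)R^N$ after integration. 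The borderline case $N=3$, $\lambda=1/2$, where the coefficient degenerates, is recovered separately by retaining the tangential Ricci eigenvalue as well (or by a limiting argument in $\lambda$).

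I expect this last step to be the main obstacle: it is exactly the quantitative balance between the $1/r$ produced by differentiating the cut-off and the growth of $h^{N-1}$ discussed in Remark~\ref{rem:6}, and the test function must be chosen to be simultaneously harmless near the pole $o$ and efficient at large $r$; matching those exponents is where the precise threshold $(N-1)/4$ is genuinely used.
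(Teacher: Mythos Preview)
The paper does not prove this theorem at all: it is quoted from Carron's survey (the reference \texttt{carron2020euclidean}, Theorem~G) purely as a sufficient condition under which the geometric hypothesis $(Vol)$ is satisfied. There is no argument in the paper to compare your sketch against.

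Regarding your sketch on its own merits: the first two reductions are sound. The bound $\rho\le -(N-1)h''/h$ is automatic since the radial Ricci eigenvalue equals $-(N-1)h''/h$ and $\rho$ is the minimum; the ground-state substitution $\varphi=\mu^{-\lambda}\psi$ does produce the weighted Hardy inequality you wrote, with coefficient $\lambda\bigl(\lambda-\tfrac{N-2}{N-1}\bigr)$, and the arithmetic $(N-1)/4\ge(N-2)/(N-1)\Longleftrightarrow(N-3)^2\ge0$ is correct. However, you yourself identify the final step as ``the main obstacle'' and do not carry it out. That step is not a formality: the Hardy inequality you obtained is compatible with arbitrarily fast growth of $\mu$ when the coefficient is small (and degenerates entirely at $N=3$, $\lambda=1/2$), so a single dyadic cut-off will not directly give $h(r)\le Cr$. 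One genuinely needs an iterative or comparison argument to upgrade the integral inequality to a pointwise bound on $h$, and your proposal stops short of supplying it. In short, the outline is plausible and the preliminary computations check out, but it remains a strategy rather than a proof; for the complete argument you will have to consult Carron's paper.
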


Once the Palais-Smale condition holds for the functional $J$, the Mountain Pass Theorem \cite{Ambrosetti_1973} guarantees the existence of a critical point
$u \in X$ of the functional $J$ such that $J(u)=c>0$.
\begin{remark}
  If $u$ is a nontrivial critical point of $J$, it follows from the
  relation $c \leq d$ and the estimate
  \begin{equation*}
    \Vert u \Vert^2 \leq 2k \left( J(u)+J'(u)[u] \right)
  \end{equation*}
  coming from \eqref{eq:ps-bounded} that $\Vert u \Vert \leq \sqrt{2kd}$, and this upper bound  does not depend on the parameter $R>1$.
\end{remark}

The next regularity result is based on an argument reminiscent of the De Giorgi-Nash-Moser iteration technique.
\begin{proposition}
  \label{prop:estimate} Suppose $a \in L^q (\mathcal{M})$ for some $q>N/2$ and that
  $v \in X$ is a weak solution of the equation
  \begin{equation*}
    -\Delta_g v + b(\sigma) v = A(\sigma,v) \quad\hbox{in $\mathcal{M}$},
  \end{equation*}
  where $A \colon \mathcal{M} \times \mathbb{R} \to \mathbb{R}$ is continuous and satisfies
  \begin{equation*}
    \left\vert A(\sigma,t) \right\vert \leq a(x) t \quad\hbox{for all $t >0$}
  \end{equation*}
  and $b\geq 0$ is a $($measurable$)$  function on $\mathcal{M}$. Then there exists a
  constant $M>0$, depending on $q$ and on $\Vert a \Vert_{L^q}$ only, such
  that
  \begin{equation*}
    \left\Vert v \right\Vert_{L^\infty} \leq M \left\Vert v \right\Vert_{L^{2^*}}.
    \end{equation*}
  \end{proposition}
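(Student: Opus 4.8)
The plan is to run a Moser-type iteration; the only genuinely Riemannian ingredient is the Sobolev inequality
\begin{equation*}
  \Vert w \Vert_{L^{2^*}(\mathcal{M})}^2 \leq S_N \Vert \nabla_g w \Vert_{L^2(\mathcal{M})}^2 \qquad \text{for every } w \in D^{1,2}(\mathcal{M}),
\end{equation*}
which holds on any Cartan--Hadamard manifold (it follows from the Euclidean-type isoperimetric inequality available on such manifolds, and is already implicit in the definition of $X$). Since $v \in X \subset D^{1,2}(\mathcal{M})$, this in particular gives $v \in L^{2^*}(\mathcal{M})$, so the right-hand side of the asserted estimate is finite; as only positive solutions are relevant, we may assume $v \geq 0$. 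Fix $\beta \geq 1$ and $L>0$, set $v_L := \min\{v,L\}$, and test the weak formulation with the bounded admissible function $\varphi := v\, v_L^{2(\beta-1)}$. The term $\int_{\mathcal{M}} b\, v^2 v_L^{2(\beta-1)}\, \D v_g$ is nonnegative and can be discarded; using $|A(\sigma,v)| \leq a(\sigma) v$ on the right and a routine chain-rule computation (distinguishing $\{v<L\}$ and $\{v\geq L\}$), one first obtains $\int_{\mathcal{M}} v_L^{2(\beta-1)}|\nabla_g v|^2\, \D v_g \leq \int_{\mathcal{M}} a\,(v\, v_L^{\beta-1})^2\, \D v_g$, and then, from the pointwise bound $|\nabla_g(v\, v_L^{\beta-1})|^2 \leq \beta^2 v_L^{2(\beta-1)} |\nabla_g v|^2$,
\begin{equation*}
  \int_{\mathcal{M}} \bigl| \nabla_g(v\, v_L^{\beta-1}) \bigr|^2\, \D v_g \leq \beta^2 \int_{\mathcal{M}} a\, (v\, v_L^{\beta-1})^2\, \D v_g .
\end{equation*}

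Next I would carry out one iteration step. Write $w_L := v\, v_L^{\beta-1}$; since $w_L \leq L^{\beta-1} v$ we have $w_L \in L^{2^*}(\mathcal{M})$. Applying the Sobolev inequality to $w_L$ and estimating the right-hand side by H\"older with exponents $q$ and $q'$ — the assumption $q>N/2$ forces $q'<2^*/2$, hence $2q' \in (2,2^*)$ — and then interpolating $L^{2q'}$ between $L^2$ and $L^{2^*}$ with parameter $\lambda = N/(2q) \in (0,1)$, one gets
\begin{equation*}
  \Vert w_L \Vert_{L^{2^*}}^2 \leq S_N \beta^2 \Vert a \Vert_{L^q}\, \Vert w_L \Vert_{L^2}^{2(1-\lambda)} \Vert w_L \Vert_{L^{2^*}}^{2\lambda}.
\end{equation*}
Since $\Vert w_L \Vert_{L^{2^*}}$ is finite, dividing gives $\Vert w_L \Vert_{L^{2^*}}^2 \leq C_\beta \Vert w_L \Vert_{L^2}^2$ with $C_\beta = (S_N \Vert a \Vert_{L^q})^{1/(1-\lambda)} \beta^{2/(1-\lambda)}$, and letting $L \to +\infty$ (monotone convergence) yields $\Vert v \Vert_{L^{2^*\beta}} \leq C_\beta^{1/(2\beta)} \Vert v \Vert_{L^{2\beta}}$.

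Finally I would iterate. Set $\chi := 2^*/2 = N/(N-2) > 1$ and apply the last inequality with $\beta = \chi^k$, $k = 1,2,\dots$, noting that $2\chi^k = 2^* \chi^{k-1}$ so the exponents chain up, which gives $\Vert v \Vert_{L^{2\chi^{k+1}}} \leq \prod_{j=1}^k C_{\chi^j}^{1/(2\chi^j)}\, \Vert v \Vert_{L^{2^*}}$. Since $C_{\chi^j}^{1/(2\chi^j)} = (S_N \Vert a\Vert_{L^q})^{1/(2\chi^j(1-\lambda))}\, \chi^{\,j/(\chi^j(1-\lambda))}$ and both $\sum_j \chi^{-j}$ and $\sum_j j\chi^{-j}$ converge, the infinite product converges to a constant $M$ depending only on $N$, $q$ and $\Vert a \Vert_{L^q}$; as $2\chi^{k+1} \to +\infty$ and $v \in L^{2^*}(\mathcal{M})$, letting $k \to +\infty$ gives $\Vert v \Vert_{L^\infty} \leq M \Vert v \Vert_{L^{2^*}}$.

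I expect the main obstacle to lie in this last step: choosing the exponents in the recursion and verifying that the product of the constants $C_{\chi^j}^{1/(2\chi^j)}$ converges to a bound depending only on $N$, $q$ and $\Vert a \Vert_{L^q}$ (and not on $v$), together with the bookkeeping needed to guarantee that at every stage $\Vert w_L \Vert_{L^{2^*}}$ is finite, so that the division producing $C_\beta$ is legitimate — the truncation at level $L$ is precisely what handles the latter. The analytic input, the Sobolev inequality on Cartan--Hadamard manifolds, is classical, and everything else is routine though somewhat lengthy.
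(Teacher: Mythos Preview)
Your argument is correct and follows essentially the same Moser--iteration scheme as the paper: truncate $v$, test the equation with a power of the truncation, drop the nonnegative $b$-term, combine the Sobolev inequality with H\"older against $a\in L^q$, and iterate the resulting recursion. The only minor difference is that after H\"older you interpolate $L^{2q'}$ between $L^2$ and $L^{2^*}$ to get a recursion with step $\chi=2^*/2$, whereas the paper omits this interpolation and iterates directly with the smaller step $\gamma=2^*/(2q')$; both variants are standard and yield the same $L^\infty$ bound with a constant depending only on $N$, $q$, and $\Vert a\Vert_{L^q}$.
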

  \begin{proof}
     Since the proof is  standard, we only give an outline and we refer to \cite{Alves_2012} for further details. Fix $\beta >1$ and $m \in \mathbb{N}$. Set
     \begin{equation*}
         w_{m}:= \begin{cases}
             v|v|^{\beta-1} & \hbox{on $A_m$} \\
             mv & \hbox{elsewhere}
         \end{cases}
         \quad \mbox{where} \quad  A_m:= \left\{  \sigma \in \mathcal{M}  \mid |v|^{\beta-1} \leq m \right\}.
     \end{equation*}
     Utilizing the definition of weak solution, the Sobolev inequality and the assumption on the potential it is possible to derive the estimate
        \begin{equation*}
            \left[ \int_{A_m} |w_m|^{2^*} \, \D v_g\right]^{\frac{N-2}{N}} \leq S \beta^2 \int_{\mathcal{M}} a(\sigma) w_m^2\, \D v_g
        \end{equation*}
     where $S>0$ denotes the optimal Sobolev constant. From this, applying the H\"older inequality with $1/q_1+1/q=1$ and letting $m \to \infty$, we get
    \begin{equation} \label{eq:4}
        \Vert v \Vert_{2^* \beta} \leq \beta ^{\frac{1}{\beta}} \left( S \Vert a \Vert_q \right)^{\frac{1}{2 \beta}} \Vert v \Vert_{2 \beta q_1}.
    \end{equation}
    At this point, since $N/(N-2)>q_1$, we set $\gamma:= N/q_1(N-2)>1$. If $\beta=\gamma$, \eqref{eq:4} becomes

\begin{equation} \label{eq:5}
        \Vert v \Vert_{2^* \gamma} \leq \beta ^{\frac{1}{\gamma}} \left( S \Vert a \Vert_q \right)^{\frac{1}{2 \gamma}} \Vert v \Vert_{2^*},
    \end{equation}
    while if $\beta=\gamma^2$, taking into account $2 \beta q_1=2^*\gamma$, we obtain
    \begin{equation} \label{eq:6}
        \Vert v \Vert_{2^* \gamma^2} \leq \gamma^{\frac{2}{\gamma}} \left( S \Vert a \Vert_q \right)^{\frac{1}{2 \gamma^2}} \Vert v \Vert_{2^* \gamma}.
    \end{equation}
    Coupling \eqref{eq:5} and \eqref{eq:6}, we get
    \begin{equation*}
        \Vert v \Vert_{2^* \gamma^2} \leq \gamma ^{\frac{1}{\gamma} + \frac{2}{\gamma^2}} \left( S \Vert a \Vert_q \right)^{\frac{1}{2} \left(\frac{1}{\gamma}+\frac{1}{\gamma^2}\right)} \Vert v \Vert_{2^*}.
    \end{equation*}
    Iterating this procedure $j$ times with $\beta=\gamma^j$, we have
    \begin{equation*}
        \Vert v \Vert_{2^* \gamma^j} \leq \gamma^{\frac{1}{\gamma} + \ldots +  \frac{j}{\gamma^j}} \left( S \Vert a \Vert_q \right)^{\frac{1}{2} \left(\frac{1}{\gamma}+ \ldots +\frac{1}{\gamma^j}\right)} \Vert v \Vert_{2^*}.
    \end{equation*}
    Now, recalling that
    \begin{equation*}
        \sum_{j=1}^\infty \frac{j}{\gamma^j}= \frac{\gamma}{(\gamma-1)^2} \quad \mbox{and} \quad \sum_{j=1}^\infty \frac{1}{\gamma^j}=\frac{1}{\gamma-1}
        \end{equation*}
        and that
        \begin{equation*}
            \lim_{j \to \infty} \Vert v \Vert_{2^* \gamma^j}=\Vert v \Vert_{\infty},
        \end{equation*}
        the proposition is proved selecting
        \begin{equation*}
            M=\gamma^{\frac{\gamma}{(\gamma-1)^2}} \left( S \Vert a \Vert_q \right)^{\frac{1}{2}\frac{1}{\gamma-1}}.
        \end{equation*}
The proof is now complete.
  \end{proof}
\begin{corollary}
	\label{cor:estimate} Any positive ground state of \eqref{eq:AP} satisfies the estimate
	\begin{equation}
		\left\Vert u \right\Vert_{L^\infty} \leq M \sqrt{2Skd},
	\end{equation}
where $S$ is the best constant for the Sobolev embedding $D^{1,2}(\mathcal{M}) \subset L^{2^*}(\mathcal{M})$.
\end{corollary}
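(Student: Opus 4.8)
Here is a plan to establish Corollary~\ref{cor:estimate}.

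The plan is to evaluate~\eqref{eq:AP} at the ground state $u$, rewrite it as a linear Schr\"odinger-type equation whose zero-order coefficient lies in $L^{q}$ for some $q>N/2$, invoke Proposition~\ref{prop:estimate} to get $\|u\|_{L^{\infty}}\le M\|u\|_{L^{2^*}}$, and then close the estimate with the Sobolev inequality and the uniform bound $\|u\|\le\sqrt{2kd}$ already recorded after~\eqref{eq:ps-bounded}.

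First I would fix a positive ground state $u$ of~\eqref{eq:AP}; it satisfies $J'(u)[u]=0$ and $J(u)\le d$ (the ground-state level is at most the mountain-pass level $c$, and $c\le d$), so~\eqref{eq:ps-bounded} gives $\frac{1}{2k}\|u\|^{2}\le J(u)\le d$, hence $\|u\|\le\sqrt{2kd}$. Since $u\in X\subset D^{1,2}(\mathcal{M})$ and $\|u\|^{2}\ge\int_{\mathcal{M}}|\nabla_{g}u|^{2}\,\D v_{g}$ (by $(V_{0})$), the definition of the optimal Sobolev constant $S$ yields
\[
\|u\|_{L^{2^*}}\le\sqrt{S}\,\|u\|_{D^{1,2}}\le\sqrt{S}\,\|u\|\le\sqrt{2Skd}.
\]
Next I would set $a(\sigma):=g(\sigma,u(\sigma))/u(\sigma)$ wherever $u(\sigma)>0$, and $a(\sigma):=0$ on $\{u=0\}$ (consistent with $g(\sigma,0)=0$). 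Then $u$ is a weak solution of $-\Delta_{g}u+V(\sigma)u=a(\sigma)u$, an equation of the type treated in Proposition~\ref{prop:estimate} with $b=V\ge0$ and $A(\sigma,t):=a(\sigma)t$, for which $|A(\sigma,t)|\le|a(\sigma)|\,t$ when $t>0$. So the whole matter reduces to showing that $a\in L^{q}(\mathcal{M})$ for some $q>N/2$, with $\|a\|_{L^{q}}$ controlled by $\|u\|_{L^{2^*}}$. From the definitions of $g$ and $\tilde{f}$ (and $(i_{1})$ of Lemma~\ref{lem:1}) one has $|g(\sigma,t)|\le|f(t)|$ for $t>0$, hence $0\le a(\sigma)\le|f(u(\sigma))|/u(\sigma)$, and the Remark after Theorem~\ref{th:main} gives $|f(s)|\le c_{0}\min\{|s|^{2^*-1},|s|^{p-1}\}$ for every $s$, so $a(\sigma)\le c_{0}\min\{u(\sigma)^{2^*-2},u(\sigma)^{p-2}\}$. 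Splitting $\mathcal{M}=\{u\le1\}\cup\{u>1\}$, I would bound $a\le c_{0}u^{2^*-2}$ on the first set and $a\le c_{0}u^{p-2}$ on the second, and choose any $q$ with $N/2<q\le 2^*/(p-2)$, a non-empty range precisely because $p<2^*$. Since then $(2^*-2)q\ge2^*$ and $(p-2)q\le2^*$,
\[
\int_{\mathcal{M}}|a|^{q}\,\D v_{g}\le c_{0}^{q}\left(\int_{\{u\le1\}}u^{2^*}\,\D v_{g}+\int_{\{u>1\}}u^{2^*}\,\D v_{g}\right)\le c_{0}^{q}\,\|u\|_{L^{2^*}}^{2^*}<+\infty,
\]
so $a\in L^{q}(\mathcal{M})$ with $\|a\|_{L^{q}}\le c_{0}\|u\|_{L^{2^*}}^{2^*/q}$, a quantity depending only on $N,q,c_{0}$ and $\sqrt{2Skd}$. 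Proposition~\ref{prop:estimate} then gives $\|u\|_{L^{\infty}}\le M\|u\|_{L^{2^*}}$, and combining with the first display yields $\|u\|_{L^{\infty}}\le M\sqrt{2Skd}$, which is the assertion.

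The step I expect to be the main obstacle is the integrability of $a$. Near $t=0$ the hypotheses only force the \emph{critical} rate $|f(t)|\le c_{0}t^{2^*-1}$, so $u^{2^*-2}$ alone lies exactly on the borderline Lebesgue space $L^{N/2}$ and just misses the strict requirement $q>N/2$ of Proposition~\ref{prop:estimate}; it is the \emph{subcritical} behaviour at infinity provided by $(f_{2})$ --- equivalently, the fact that $\{u>1\}$ has finite measure (because $u\in L^{2^*}$) and that $u^{p-2}$ is genuinely more integrable there --- that lets one push the admissible exponent strictly above $N/2$. A routine technical caveat, as always in this bootstrap, is that $a$ is merely measurable, so Proposition~\ref{prop:estimate} must be read accordingly: its De Giorgi--Nash--Moser iteration uses only the pointwise bound $|A(\sigma,t)|\le|a(\sigma)|\,t$.
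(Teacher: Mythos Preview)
Your argument is correct and follows essentially the same route as the paper: rewrite~\eqref{eq:AP} as a linear equation with a nonnegative zero-order coefficient, bound the right-hand side by $a(\sigma)u$ with $a\in L^{q}$ for some $q>N/2$, apply Proposition~\ref{prop:estimate}, and close with the Sobolev inequality and the energy bound $\|u\|\le\sqrt{2kd}$. The only difference is that the paper avoids your splitting into $\{u\le1\}$ and $\{u>1\}$ by observing that the single estimate $|f(t)|\le c_{0}|t|^{p-1}$ already holds for \emph{all} $t\ge0$ (since $|t|^{2^*-1}\le|t|^{p-1}$ when $|t|\le1$), so one may take $a(\sigma)=c_{0}|u(\sigma)|^{p-2}$ and $q=2^*/(p-2)$ directly; your concern about the borderline exponent near $t=0$ is therefore unfounded.
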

\begin{proof}
	Indeed, we consider the functions
	\begin{equation*}
		A(\sigma,t) := \begin{cases}
			f(t) &\hbox{if $d_g(\sigma,o) <R$ or $f(t) \leq \frac{V(\sigma)}{k} t$}, \\
			0 &\hbox{if $d_g(\sigma,o) \geq R$ or $f(t) > \frac{V(\sigma)}{k} t$}
		\end{cases}
	\end{equation*}
and
\begin{equation*}
	b(\sigma) := \begin{cases}
		V(\sigma) &\hbox{if $d_g(\sigma,o) <R$ or $f(t) \leq \frac{V(\sigma)}{k} t$}, \\
		\left( 1 - \frac{1}{k} \right) V(\sigma) &\hbox{if $d_g(\sigma,o) \geq R$ or $f(t) > \frac{V(\sigma)}{k} t$}.
	\end{cases}
\end{equation*}
Any positive solution $u$ to \eqref{eq:AP} satisfies the equation
\begin{equation*}
	-\Delta_g u + b(\sigma)u = A(\sigma,u) \quad \hbox{in $\mathcal{M}$}.
\end{equation*}	
Our assumptions on $f$ yield that $\vert A(\sigma,t) \vert \leq f(t) \leq c_0  \vert t \vert^{p-1}$, hence
\begin{equation*}
	\left\vert A(\sigma,t) \right\vert \leq a(\sigma) \vert t \vert \quad \hbox{with $a(\sigma) = c_0 \vert u(\sigma) \vert^{p-2}$.}
\end{equation*}
 For $q=2^*/(p-2)$ it is immediate to check that $a \in L^q(\mathcal{M})$ and
\begin{equation*}
	\left\Vert a \right\Vert_{L^q} \leq c_0 \left( 2Skd\right)^{\frac{p-2}{2}}.
\end{equation*}
The conclusion follows from Proposition \ref{prop:estimate}.
\end{proof}

\section{Proof of Theorem \ref{th:main}}

\begin{proposition}
	If  $u$ is a positive ground state solution to \eqref{eq:AP}, then
	\begin{equation}
		u(\sigma) \leq \frac{\mathbf{H}(d_g(\sigma,o))}{\mathbf{H}(R)} \Vert u \Vert_{L^\infty} \leq \frac{\mathbf{H}(d_g(\sigma,o))}{\mathbf{H}(R)} M \sqrt{2Skd}
	\end{equation}
	whenever $d_g(\sigma,o) \geq R$.
\end{proposition}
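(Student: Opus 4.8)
The plan is to run a comparison argument on the exterior region $\Omega := \{\sigma \in \mathcal{M} : d_g(\sigma,o) > R\}$. On $\Omega$, part $(i_2)$ of Lemma~\ref{lem:1} gives $g(\sigma,u) \leq \frac{V(\sigma)}{k}\, u$; since $u>0$ is a weak solution of \eqref{eq:AP}, $V\geq 0$, and $k=\frac{2\mu}{\mu-2}>2$, this says that $u$ is a weak \emph{subsolution},
\begin{equation*}
  -\Delta_g u + \Bigl(1-\tfrac1k\Bigr) V(\sigma)\, u \leq 0 \qquad\text{in } \Omega .
\end{equation*}
The natural competitor is
\begin{equation*}
  \phi(\sigma) := \frac{\|u\|_{L^\infty}}{\mathbf{H}(R)}\, \mathbf{H}\bigl(d_g(\sigma,o)\bigr) ,
\end{equation*}
which, by the discussion preceding Theorem~\ref{th:main}, is harmonic on $\mathcal{M}\setminus\{o\}$, hence on $\Omega$; therefore $-\Delta_g\phi + (1-\tfrac1k)V\phi = (1-\tfrac1k)V\phi \geq 0$ in $\Omega$, so $\phi$ is a supersolution. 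Moreover $\phi\equiv\|u\|_{L^\infty}\geq u$ on the inner boundary $\{d_g(\sigma,o)=R\}$ and $\phi(\sigma)\to 0$ as $d_g(\sigma,o)\to+\infty$. So I expect $u\leq\phi$ on $\overline\Omega$, which is exactly the first inequality; the second one is immediate from Corollary~\ref{cor:estimate}.

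To prove $u\leq\phi$ I would use the weak maximum principle, testing the inequality for $w:=u-\phi$ against $w^+:=\max\{w,0\}$. First one checks admissibility. Note $\phi\in D^{1,2}(\Omega)$, since
\begin{equation*}
  \int_\Omega |\nabla_g\phi|^2\,\D v_g = \frac{\|u\|_{L^\infty}^2\,\omega_N}{\mathbf{H}(R)^2}\int_R^{+\infty}\frac{\D t}{h(t)^{N-1}} = \frac{\|u\|_{L^\infty}^2\,\omega_N}{\mathbf{H}(R)} < +\infty
\end{equation*}
by $(h)$; together with $u\in X$ this yields $w^+\in D^{1,2}(\mathcal{M})$ with $\operatorname{supp} w^+\subset\{d_g(\sigma,o)\geq R\}$ and $w^+=0$ on $\{d_g(\sigma,o)=R\}$, so that $w^+\in D^{1,2}_0(\Omega)$. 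Also, on $\{w>0\}$ one has $0<\phi<u$, hence $(w^+)^2<u^2$ and $\phi\, w^+<u^2$ there, so $\int_{\mathcal{M}} V(w^+)^2\,\D v_g\leq\int_{\mathcal{M}}V u^2\,\D v_g<+\infty$ and $w^+\in X$.

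Then, using $J'(u)[w^+]=0$ and $\{w^+>0\}\subset\Omega$,
\begin{equation*}
  \int_{\mathcal{M}}\bigl(\nabla_g u\cdot\nabla_g w^+ + V u\, w^+\bigr)\,\D v_g = \int_{\mathcal{M}} g(\sigma,u)\,w^+\,\D v_g \leq \frac1k\int_{\mathcal{M}} V u\, w^+\,\D v_g ,
\end{equation*}
that is, $\int_{\mathcal{M}}\nabla_g u\cdot\nabla_g w^+\,\D v_g + (1-\tfrac1k)\int_{\mathcal{M}}V u\, w^+\,\D v_g\leq 0$; meanwhile harmonicity of $\phi$ on $\Omega$ together with $w^+\in D^{1,2}_0(\Omega)$ gives $\int_{\mathcal{M}}\nabla_g\phi\cdot\nabla_g w^+\,\D v_g=0$. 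Subtracting, discarding the nonnegative term $(1-\tfrac1k)\int V\phi\, w^+$, and invoking the pointwise identities $\nabla_g w\cdot\nabla_g w^+=|\nabla_g w^+|^2$ and $w\,w^+=(w^+)^2$ a.e., we obtain
\begin{equation*}
  \int_{\mathcal{M}}|\nabla_g w^+|^2\,\D v_g + \Bigl(1-\tfrac1k\Bigr)\int_{\mathcal{M}} V (w^+)^2\,\D v_g \leq 0 .
\end{equation*}
Both summands are nonnegative, so $\nabla_g w^+\equiv 0$, and the Sobolev inequality $\|w^+\|_{L^{2^*}}\leq S^{-1/2}\|\nabla_g w^+\|_{L^2}$ forces $w^+\equiv 0$; thus $u\leq\phi$ a.e.\ in $\Omega$, and everywhere on $\{d_g(\sigma,o)\geq R\}$ by continuity of $u$ (the De Giorgi--Nash--Moser regularity behind Proposition~\ref{prop:estimate}) and of $\phi$. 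Combining with $\|u\|_{L^\infty}\leq M\sqrt{2Skd}$ from Corollary~\ref{cor:estimate} gives both displayed inequalities.

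The main obstacle I anticipate is the rigorous justification that $w^+\in D^{1,2}_0(\Omega)$ and, relatedly, that the boundary contribution at infinity in the integration by parts against the harmonic function $\phi$ genuinely vanishes. This is exactly the spot where the competition between the $1/r$ decay of cut-off gradients and the volume growth of $\mathcal{M}$ (see Remark~\ref{rem:6}) resurfaces; I would dispose of it by approximating $w^+$ by $(w^+-\varepsilon)^+\,\eta_n$, with $\eta_n$ the Euclidean-type cut-offs already employed in the proof of the Palais--Smale condition, and letting $\varepsilon\to0$ and $n\to+\infty$, using $(V_1)$ or $(Vol)$ to control the truncation error precisely as there. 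Everything else is a routine application of the weak maximum principle.
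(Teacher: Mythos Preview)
Your argument is correct and is essentially the same comparison/weak-maximum-principle proof the paper gives: define the harmonic competitor $\phi$ (the paper uses the constant $M\sqrt{2Skd}$ in place of your $\|u\|_{L^\infty}$, which is equivalent via Corollary~\ref{cor:estimate}), test against $(u-\phi)^+$, and use $(i_2)$ to conclude $\int|\nabla_g(u-\phi)^+|^2\leq 0$. Your write-up is actually more careful than the paper's, which does not explicitly verify that $\omega\in D^{1,2}(\mathcal{M})$ or discuss the boundary term at infinity.
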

\begin{proof}
	Indeed, we know that the function $\mathbf{H}$ is harmonic on $\mathcal{M}$, and so is the function
	\begin{equation*}
		v  \colon \sigma \mapsto M \sqrt{2Skd} \; \frac{\mathbf{H}(d_g(\sigma,o))}{\mathbf{H}(R)} .
	\end{equation*}
Since $u \leq v$ whenever $d_g(\sigma,o) \geq R$ by Corollary \ref{cor:estimate}, we are allowed to define $\omega \in D^{1,2}(\mathcal{M})$ as
\begin{equation*}
	\omega(\sigma) := \begin{cases}
		\left( u-v \right)^{+} &\hbox{if $d_g(\sigma,o) \geq R$} \\
		0 &\hbox{otherwise}.
	\end{cases}
\end{equation*}
We then see that
\begin{multline*}
	\int_{\mathcal{M}} \left\vert \nabla_g \omega \right\vert^2 \, \D v_g \\
	= \int_{\mathcal{M}} \nabla_g (u-v) \cdot \nabla_g \omega \, \D v_g = \int_{d_g(\sigma,o) \geq R} \left( g(\sigma,u)\omega - V(\sigma)u\omega \right) \D v_g \\
	\leq \left( \frac{1}{k}-1 \right) \int_{\mathcal{M}} V(\sigma) u\omega \, \D v_g \leq 0.
\end{multline*}
It follows that $\omega =0$ on $\mathcal{M}$, and $u \leq v$ whenever $d_g(\sigma,o) \geq R$.
\end{proof}

\begin{proof}[Proof of Theorem \ref{th:main}]
	Let $u \in X$ be a positive ground state solution of \eqref{eq:AP}. For every $\sigma \in \mathcal{M}$ such that $d_g(\sigma,o) \geq R$, we have for any
	\begin{equation*}
		\Lambda >  k c_0 M^{\frac{4}{N-2}} \left( 2Skd \right)^{\frac{2}{N-2}},
	\end{equation*}
	the estimate
	\begin{eqnarray*}
		\frac{f(u)}{u} &\leq& c_0 \left\vert u \right\vert^{\frac{4}{N-2}} \leq c_0 M^{\frac{4}{N-2}} \left( 2Skd \right)^{\frac{2}{N-2}} \left(\frac{\mathbf{H}(d_g(\sigma,o))}{\mathbf{H}(R)} \right)^{\frac{4}{N-2}} \\
		&\leq& \frac{V(\sigma)}{k}.
	\end{eqnarray*}
 It now follows that $u$ solves \eqref{eq:P}, and the proof is complete.
\end{proof}

\bibliographystyle{amsplain}
\bibliography{biblio}

\end{document}